\newtheorem{theorem}{Theorem}[section]
\newtheorem{proposition}[theorem]{Proposition}
\newtheorem{remark}{Remark}
\newtheorem{note}[theorem]{Note}
\newtheorem{example}{Example}
\newtheorem{definition}{Definition}
\newenvironment{proof}[1][Proof]{\begin{trivlist}
\item[\hskip \labelsep {\bfseries #1}]}{\end{trivlist}}
\newcommand{\eqnum}{\leavevmode\hfill\refstepcounter{equation}\textup{\tagform@{\theequation}}}
\date{}
\begin{document}

\title{  Some Information Inequalities for Statistical Inference}
\author{Harsha K V and Alladi Subramanyam \\
 \normalsize Department of Mathematics, IIT Bombay, Mumbai, India \\\small Email: harshakv@math.iitb.ac.in ; as@math.iitb.ac.in}
\maketitle
\begin{abstract}
In this paper, we first describe the generalized notion of Cramer-Rao lower bound obtained by Naudts (2004) using two families of probability density functions, the original model and an escort model. We reinterpret the results in Naudts (2004) from a  statistical point of view and obtain some interesting examples in which this bound is attained. Further we obtain information inequalities which generalize the classical Bhattacharyya bounds in both regular and non-regular cases. 
\end{abstract}
\section{Introduction}
For every unbiased estimator $T$, an inequality of the type 
\begin{equation}
\mathrm{Var}_{\theta}(T) \geq d(\theta) 
\end{equation}
for every $\theta$ in the  parameter space $\Theta$, is called an information inequality and it plays an important role in parameter estimation. The early works of Cramer (1946) and Rao (1945) introduced the Cramer-Rao inequality for regular density functions.  For the non-regular density functions,  Hammersley (1950) and Chapman-Robbins (1951) introduced an inequality which come to be known as Hammersley-Chapman-Robbins inequality while Fraser and Guttman (1952) obtained the Bhattacharyya bounds. Later Vincze (1979) and Khatri (1980) introduced information inequalities by imposing the regularity assumptions on a  prior distribution rather than on the model. 

Recently in statistical physics, a generalized notion of Fisher information and a corresponding Cramer-Rao lower bound are introduced by Naudts (2004) using two families of probability density functions, the original model and an escort model.  Further he showed that in the case of a deformed exponential family of probability density functions, there exist an escort family and an estimator whose variance attains the bound. Also from an information geometric point of view, he obtained a dually flat structure of the deformed exponential family. 

In this article, concentrating on the statistical aspects of Naudts's paper we define several information inequalities which generalize the classical Hammersley-Chapman-Robbins bound and Bhattacharyya Bounds in both regular and non-regular cases. This is done by imposing the regularity conditions on the escort model rather than on the original model.

In Section 2, some preliminary results are stated. Section 3 describes the generalized Cramer-Rao lower bound obtained by Naudts (2004) reinterpreted from a statistical point of view and applied to many examples. Also we obtain many interesting examples in which the bound is optimal. In Section 4, we obtain a generalized notion of Bhattacharyya bounds in both regular and non-regular cases. We conclude with Discussions in Section 5.

\section{Preliminaries}
Let $X$ be a random vector with probability density function $f(x,\underaccent{\bar}{\theta})$, where $ \underaccent{\bar}{\theta} =(\theta_{1},\cdots,\theta_{p})^{\intercal} \in \Theta \in \Bbb R^{p}$ and $X$ takes values in $A\subseteq \Bbb R^{n}$.
To estimate a real valued function $\varphi$ of $\underaccent{\bar}{\theta}$, define a class of estimators as
\begin{eqnarray}
\mathcal{C}_{\varphi} =\lbrace S(X)  \mid  E_{f_{\underaccent{\bar}{\theta}}}(S(X))=\varphi(\underaccent{\bar}{\theta}), \forall \; \underaccent{\bar}{\theta} \in \Theta \rbrace.
\end{eqnarray}
Define 
\begin{eqnarray}
\mathcal{U}_{f} &= &\lbrace U(X) \mid E_{f_{\underaccent{\bar}{\theta}}}(U)=0 \; ; E_{f_{\underaccent{\bar}{\theta}}}(U^{2}) < \infty, \; \forall \; \underaccent{\bar}{\theta} \in \Theta \rbrace 
\end{eqnarray}
Let $\Psi=\lbrace \psi(x,\underaccent{\bar}{\theta}) \mid E_{f_{\underaccent{\bar}{\theta}}}(\psi)=0, \; E_{f_{\underaccent{\bar}{\theta}}}(\psi^{2}) < \infty, \; \mathrm{Cov}_{f_{\underaccent{\bar}{\theta}}}(U,\psi)=0, \; \forall \; U \in \mathcal{U}_{f}, \; \forall \; \underaccent{\bar}{\theta}  \rbrace$.\\
Let $S_{1}(x,\underaccent{\bar}{\theta}),\cdots,S_{m}(x,\underaccent{\bar}{\theta}) \in \Psi$. Let 
\begin{eqnarray}
E_{f_{\underaccent{\bar}{\theta}}}(TS_{i})=\lambda_{i}(\underaccent{\bar}{\theta}) ; \quad i=1, \cdots, m
\end{eqnarray}
where $\lambda_{i}$ is a real valued function of $\underaccent{\bar}{\theta}$.\\
Define
\begin{equation}
\psi(x,\underaccent{\bar}{\theta})=\sum_{i=1}^{m}\alpha_{i} S_{i}(x,\underaccent{\bar}{\theta}), \quad \alpha_{i} \in \Bbb R
\end{equation}
For any estimators $T, S \in \mathcal{C}_{\varphi}$, 
\begin{eqnarray}
\mathrm{Cov}_{f_{\theta}}(T,\psi)=\mathrm{Cov}_{f_{\theta}}(S,\psi)=\delta(\theta) \quad \mathrm{since} \quad T-S \in \mathcal{U}_{f}, \psi \in \Psi
\end{eqnarray}
Therefore $ \forall \; T \in \mathcal{C}_{\varphi}$, the Cauchy-Schwarz inequality
\begin{eqnarray}
\mathrm{Var}_{f_{\underaccent{\bar}{\theta}}} (T(x)) \geq \frac{(\mathrm{Cov}_{f_{\underaccent{\bar}{\theta}}}(T,\psi))^{2}}{\mathrm{Var}_{f_{\underaccent{\bar}{\theta}}} (\psi)} = \frac{\delta(\theta)^{2}}{\mathrm{Var}_{f_{\underaccent{\bar}{\theta}}} (\psi)}. \label{eq:1ab}
\end{eqnarray}
gives a lower  bound for the variance of all unbiased estimators of $\varphi(\underaccent{\bar}{\theta})$.\\
Now consider
\begin{eqnarray}
\mathrm{Var}_{f_{\underaccent{\bar}{\theta}}} (\psi) & = & \mathrm{Var}_{f_{\underaccent{\bar}{\theta}}} (\sum_{i=1}^{m}\alpha_{i} S_{i})= \alpha^{\intercal} \Sigma \alpha \\
(\mathrm{Cov}_{f_{\underaccent{\bar}{\theta}}}(T,\psi))^{2} &=& (\sum_{i=1}^{m}\alpha_{i} \lambda_{i}(\underaccent{\bar}{\theta}))^{2} =\alpha^{\intercal} MM^{\intercal} \alpha 
\end{eqnarray}
where $\alpha=(\alpha_{1},\cdots,\alpha_{m})^{\intercal} \in \Bbb R^{m}$, $\Sigma=(\Sigma_{ij})=(\mathrm{Cov}_{f}(S_{i},S_{j}))$ is the covariance matrix of $S=(S_{1},\cdots, S_{m})^{\intercal}$ and $M=(\lambda_{1}(\underaccent{\bar}{\theta}),\cdots, \lambda_{m}(\underaccent{\bar}{\theta}))^{\intercal}$.\\
Note that both $M$ and $\Sigma$ depends on $\underaccent{\bar}{\theta}$. But  for the convenience of writing, we suppress the index $\underaccent{\bar}{\theta}$. \\
Equation \eqref{eq:1ab} becomes
 \begin{eqnarray}
\mathrm{Var}_{f_{\underaccent{\bar}{\theta}}} (T(x)) \geq \frac{\alpha^{\intercal} M M^{\intercal} \alpha}{\alpha^{\intercal} \Sigma \alpha } \quad \forall \; \alpha \in \Bbb R^{m}
\end{eqnarray}
which implies
\begin{eqnarray}
\mathrm{Var}_{f_{\underaccent{\bar}{\theta}}} (T(x)) \geq \sup_{\alpha} \frac{\alpha^{\intercal} M M^{\intercal} \alpha}{\alpha^{\intercal} \Sigma \alpha } =M^{\intercal} \Sigma^{-1} M
\end{eqnarray}
where $\Sigma^{-1}$ is the inverse of the covariance matrix $\Sigma$.\\
For later use, we state the following well known theorem as 
\begin{proposition}{\label{prop1}}{\textbf{Information Inequality}}.
Let $X$ be a random vector with probability density function (pdf) $f(x,\underaccent{\bar}{\theta})$, where $\underaccent{\bar}{\theta}=(\theta_{1},\cdots,\theta_{p})^{\intercal} \in \Theta \in \Bbb R^{p}$. Consider an estimator $T(X) \in \mathcal{C}_{\varphi}$ , $S_{1}(x,\underaccent{\bar}{\theta}),\cdots,S_{m}(x,\underaccent{\bar}{\theta}) \in \Psi$ and the functions $\lambda_{i} :\Theta \rightarrow \Bbb R ; i=1, \cdots, m $  with 
\begin{eqnarray}
E_{f_{\underaccent{\bar}{\theta}}}(TS_{i})=\lambda_{i}(\underaccent{\bar}{\theta}) ; \quad i=1, \cdots, m
\end{eqnarray}
Then the variance of $T$ satisfies the inequality
\begin{eqnarray}
\mathrm{Var}_{f_{\underaccent{\bar}{\theta}}} (T(x)) \geq M^{\intercal} \Sigma^{-1} M \label{eq1a}
\end{eqnarray}
where $M=(\lambda_{1}(\underaccent{\bar}{\theta}),\cdots, \lambda_{m}(\underaccent{\bar}{\theta}))^{\intercal}$ and $\Sigma^{-1}$ is the inverse of the covariance matrix $\Sigma=(\Sigma_{ij})=(\mathrm{Cov}_{f_{\underaccent{\bar}{\theta}}}(S_{i},S_{j}))$.
The equality in \eqref{eq1a} holds iff  
\begin{equation}
S^{\intercal} \Sigma^{-1} M =a(\underaccent{\bar}{\theta}) (T(x)-\varphi(\underaccent{\bar}{\theta})) \label{eq1b}
\end{equation}
 for some function $a(\underaccent{\bar}{\theta})$ and $S=(S_{1},\cdots,S_{m})^{\intercal}$
\end{proposition}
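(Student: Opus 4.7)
The plan is to treat this as essentially a packaging of Cauchy--Schwarz plus a maximization of a Rayleigh quotient, since the preamble already derives $\mathrm{Var}_{f_{\underaccent{\bar}{\theta}}}(T) \geq \alpha^{\intercal} MM^{\intercal}\alpha / \alpha^{\intercal}\Sigma\alpha$ for every $\alpha \in \mathbb{R}^m$. So the first step is to compute $\sup_{\alpha\neq 0} \alpha^{\intercal} MM^{\intercal}\alpha / \alpha^{\intercal}\Sigma\alpha$. Assuming $\Sigma$ is positive definite, I would either (i) substitute $\beta = \Sigma^{1/2}\alpha$ to reduce to maximizing $\beta^{\intercal}\Sigma^{-1/2}MM^{\intercal}\Sigma^{-1/2}\beta / \beta^{\intercal}\beta$, whose value is the unique nonzero eigenvalue of the rank-one matrix $\Sigma^{-1/2}MM^{\intercal}\Sigma^{-1/2}$, namely its trace $M^{\intercal}\Sigma^{-1}M$; or (ii) apply Cauchy--Schwarz in the inner product $\langle u,v\rangle_\Sigma = u^{\intercal}\Sigma v$ to the vectors $\alpha$ and $\Sigma^{-1}M$, giving $(\alpha^{\intercal}M)^2 = \langle \alpha,\Sigma^{-1}M\rangle_\Sigma^2 \leq (\alpha^{\intercal}\Sigma\alpha)(M^{\intercal}\Sigma^{-1}M)$, with equality exactly when $\alpha \propto \Sigma^{-1}M$. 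This yields \eqref{eq1a} and identifies the optimal choice $\alpha^{\star}=\Sigma^{-1}M$.

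For the equality characterization, I would recall that Cauchy--Schwarz in \eqref{eq:1ab} is tight iff $T-\varphi(\underaccent{\bar}{\theta})$ and $\psi$ are linearly dependent in $L^2(f_{\underaccent{\bar}{\theta}})$, i.e.\ $T(x)-\varphi(\underaccent{\bar}{\theta}) = c(\underaccent{\bar}{\theta})\,\psi(x,\underaccent{\bar}{\theta})$ for some function $c$, a.s.\ with respect to $f_{\underaccent{\bar}{\theta}}$. Plugging in the optimal $\alpha^{\star}=\Sigma^{-1}M$ gives $\psi^{\star}(x,\underaccent{\bar}{\theta}) = (\alpha^{\star})^{\intercal}S = M^{\intercal}\Sigma^{-1}S = S^{\intercal}\Sigma^{-1}M$, so the equality condition becomes $S^{\intercal}\Sigma^{-1}M = a(\underaccent{\bar}{\theta})(T(x)-\varphi(\underaccent{\bar}{\theta}))$ with $a(\underaccent{\bar}{\theta})=1/c(\underaccent{\bar}{\theta})$, which is precisely \eqref{eq1b}. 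I would check both directions: necessity follows from the Cauchy--Schwarz equality case together with the fact that the Rayleigh quotient supremum is actually attained (so no slack arises from the maximization step), and sufficiency follows by computing $\mathrm{Var}(T)$ directly under the assumption \eqref{eq1b} and matching it with $M^{\intercal}\Sigma^{-1}M$ via the covariance identity $\mathrm{Cov}(T,S_i)=\lambda_i(\underaccent{\bar}{\theta})$.

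The main obstacle is not analytical depth but bookkeeping: one must ensure $\Sigma$ is actually positive definite (otherwise $\Sigma^{-1}$ is meaningless; in a degenerate case the $S_i$ are linearly dependent and one passes to a subfamily), and one must keep track of the constraint $S_i \in \Psi$ to justify the identity $\mathrm{Cov}(T,\psi)=\mathrm{Cov}(S,\psi)$ for any other $S\in\mathcal{C}_\varphi$, which was what let the bound depend only on $\varphi$ rather than on the particular unbiased estimator. Once these points are in place, the proof reduces to the two short arguments above.
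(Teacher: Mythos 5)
Your proposal is correct and follows essentially the same route as the paper: the paper's Section 2 derives the bound by applying Cauchy--Schwarz to $\psi=\sum_i\alpha_iS_i$ and then taking the supremum of the Rayleigh quotient $\alpha^{\intercal}MM^{\intercal}\alpha/\alpha^{\intercal}\Sigma\alpha$ to obtain $M^{\intercal}\Sigma^{-1}M$, exactly as you do. Your explicit evaluation of that supremum and your two-directional verification of the equality condition \eqref{eq1b} merely fill in details the paper leaves implicit (it states the proposition as ``well known'' without a separate proof), so there is no substantive difference in approach.
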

\section{Generalized Cramer-Rao Type Lower Bound}
Naudts (2004) introduced a generalized notion of  Fisher information by replacing the original model by an escort model at suitable places. Using this, he obtained a generalized Cramer-Rao lower bound. To study the statistical implications of this generalization, first we reinterpret Naudts's generalized as follows.\\
Let $g(x,\underaccent{\bar}{\theta})$ be any density function parametrized by $\underaccent{\bar}{\theta}=(\theta_{1},\cdots,\theta_{p})^{\intercal} \in \Theta \in \Bbb R^{p}$. Define
\begin{eqnarray}
\mathcal{U}_{g} &= &\lbrace U(X) \mid E_{g_{\underaccent{\bar}{\theta}}}(U)=0 \; ; E_{g_{\underaccent{\bar}{\theta}}}(U^{2}) < \infty \; \forall \; \underaccent{\bar}{\theta} \in \Theta \rbrace
\end{eqnarray}
Let us make the following assumptions,
\begin{enumerate}[label=(\alph*)]
\item The probability measure $P_{g} $ is absolutely continuous with respect to the probability measure $P_{f}.$\eqnum\label{myeqn1}
\item $\mathcal{U}_{f} \subseteq \mathcal{U}_{g}$. \eqnum\label{myeqn2}
\end{enumerate}
\begin{remark} \label{rem1}
If $T$ is a complete statistic, then clearly $\mathcal{U}_{f} \subseteq \mathcal{U}_{g}$.
\end{remark}
Naudts (2004) defined a generalized Fisher information $N(\underaccent{\bar}{\theta})=(N_{ij}(\underaccent{\bar}{\theta}))$ as 
 \begin{eqnarray}
 N_{ij}(\underaccent{\bar}{\theta})= \int \partial_{i} g(x,\underaccent{\bar}{\theta}) \partial_{j} g(x,\underaccent{\bar}{\theta}) \frac{1}{f(x,\underaccent{\bar}{\theta})} dx \; ;\quad \partial_{i}:=\frac{\partial}{\partial \theta_{i}}  \; \mathrm{and} \; \; i,j=1,\cdots, p   
 \end{eqnarray}
 Note that when $f=g$, $N(\underaccent{\bar}{\theta})$ reduces to the Fisher information $I(\underaccent{\bar}{\theta})$.
 \begin{theorem} Let $X$ be a random vector with pdf $f(x,\underaccent{\bar}{\theta})$.  Let $g(x,\underaccent{\bar}{\theta})$ be a pdf satisfying \eqref{myeqn1} $  \& $ \eqref{myeqn2}. Assume that
 \begin{enumerate}[label=(\alph*)]
\item  $ \partial_{i} g(x,\underaccent{\bar}{\theta}) $ exists for all $x \in A$ and $\underaccent{\bar}{\theta} \in \Theta$, where $i=1,\cdots, p$  \eqnum\label{myeqn3}
\item $0< N_{ij}(\underaccent{\bar}{\theta}) < \infty$ and $N(\underaccent{\bar}{\theta})$ is non-singular. \eqnum\label{myeqn4}
\item partial derivatives of functions of $\underaccent{\bar}{\theta}$ expressed as integrals with respect to $g(x,\underaccent{\bar}{\theta})$ can be obtained by differentiating under the integral sign. \eqnum\label{myeqn5}
\end{enumerate}
Then for $T(X) \in \mathcal{C}_{\varphi}$, the variance of $T$ satisfies  
 \begin{eqnarray}
\mathrm{Var}_{f_{\underaccent{\bar}{\theta}}}(T(X)) \geq   M^{\intercal} N^{-1}(\underaccent{\bar}{\theta}) M \label{eq:1d}
\end{eqnarray}
where $E_{g_{\underaccent{\bar}{\theta}}}[T]=\lambda(\underaccent{\bar}{\theta})$ and $M=(\partial_{1}\lambda(\underaccent{\bar}{\theta}),\cdots, \partial_{p}\lambda(\underaccent{\bar}{\theta})^{\intercal}$. 
\end{theorem}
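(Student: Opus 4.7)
The plan is to apply Proposition \ref{prop1} with a specific choice of functions $S_1,\dots,S_p$ built from the escort density $g$. The natural candidate is
\begin{equation*}
S_i(x,\underaccent{\bar}{\theta}) \;=\; \frac{\partial_i g(x,\underaccent{\bar}{\theta})}{f(x,\underaccent{\bar}{\theta})}, \qquad i=1,\dots,p,
\end{equation*}
which is well-defined $f$-a.s.\ thanks to the absolute continuity assumption \eqref{myeqn1}. If these $S_i$ can be shown to lie in $\Psi$, then the inequality follows directly from \eqref{eq1a}, because the covariance matrix $\Sigma=(\mathrm{Cov}_{f_{\underaccent{\bar}{\theta}}}(S_i,S_j))$ will reduce to $N(\underaccent{\bar}{\theta})$ and the vector $M$ will reduce to $(\partial_1\lambda,\dots,\partial_p\lambda)^{\intercal}$.

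First I would verify membership of $S_i$ in $\Psi$ in three steps. For the mean-zero condition, I push the derivative out of the integral using \eqref{myeqn5}:
\begin{equation*}
E_{f_{\underaccent{\bar}{\theta}}}(S_i)=\int \partial_i g(x,\underaccent{\bar}{\theta})\,dx=\partial_i \int g(x,\underaccent{\bar}{\theta})\,dx=\partial_i(1)=0.
\end{equation*}
The second moment is finite by \eqref{myeqn4}, since $E_{f_{\underaccent{\bar}{\theta}}}(S_i^2)=\int(\partial_i g)^2/f\,dx=N_{ii}(\underaccent{\bar}{\theta})<\infty$. The decisive step is the orthogonality condition: for any $U\in\mathcal{U}_f$, I use \eqref{myeqn2} to get $U\in\mathcal{U}_g$, so $E_{g_{\underaccent{\bar}{\theta}}}(U)=\int U(x)g(x,\underaccent{\bar}{\theta})\,dx=0$ identically in $\underaccent{\bar}{\theta}$. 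Differentiating under the integral sign with \eqref{myeqn5} yields
\begin{equation*}
0=\partial_i \int U(x)g(x,\underaccent{\bar}{\theta})\,dx=\int U(x)\,\partial_i g(x,\underaccent{\bar}{\theta})\,dx = E_{f_{\underaccent{\bar}{\theta}}}(US_i),
\end{equation*}
which, combined with $E_{f_{\underaccent{\bar}{\theta}}}(S_i)=0$, gives $\mathrm{Cov}_{f_{\underaccent{\bar}{\theta}}}(U,S_i)=0$. Hence $S_i\in\Psi$.

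Next I would compute the quantities $\lambda_i=E_{f_{\underaccent{\bar}{\theta}}}(TS_i)$ and $\Sigma_{ij}$. The same differentiation-under-the-integral trick applied to $E_{g_{\underaccent{\bar}{\theta}}}(T)=\lambda(\underaccent{\bar}{\theta})$ gives
\begin{equation*}
E_{f_{\underaccent{\bar}{\theta}}}(TS_i)=\int T(x)\,\partial_i g(x,\underaccent{\bar}{\theta})\,dx=\partial_i \lambda(\underaccent{\bar}{\theta}),
\end{equation*}
so $M=(\partial_1\lambda,\dots,\partial_p\lambda)^{\intercal}$ as required. For the covariance matrix, since $E_{f_{\underaccent{\bar}{\theta}}}(S_i)=0$,
\begin{equation*}
\Sigma_{ij}=E_{f_{\underaccent{\bar}{\theta}}}(S_iS_j)=\int \frac{\partial_i g\,\partial_j g}{f}\,dx = N_{ij}(\underaccent{\bar}{\theta}),
\end{equation*}
so $\Sigma=N(\underaccent{\bar}{\theta})$, which is invertible by \eqref{myeqn4}. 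Plugging into \eqref{eq1a} gives $\mathrm{Var}_{f_{\underaccent{\bar}{\theta}}}(T)\ge M^{\intercal}N^{-1}M$, proving \eqref{eq:1d}.

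The main conceptual obstacle is the orthogonality verification $\mathrm{Cov}_{f_{\underaccent{\bar}{\theta}}}(U,S_i)=0$: this is exactly where both nontrivial hypotheses on the escort model come together, namely the inclusion $\mathcal{U}_f\subseteq\mathcal{U}_g$ of assumption \eqref{myeqn2} (so $E_g(U)$ vanishes in the first place) and the regularity assumption \eqref{myeqn5} (so differentiation under the integral is legal). The remaining computations are routine once the score-like objects $\partial_i g/f$ have been shown to live in $\Psi$.
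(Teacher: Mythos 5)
Your proof is correct and follows exactly the route the paper takes: set $S_i=\partial_i g/f$, verify $S_i\in\Psi$, identify $\Sigma$ with $N(\underaccent{\bar}{\theta})$ and $E_{f_{\underaccent{\bar}{\theta}}}(TS_i)$ with $\partial_i\lambda$, then invoke Proposition \ref{prop1}. In fact you supply the membership verification (mean zero, finite second moment, and the orthogonality to $\mathcal{U}_f$ via \eqref{myeqn2} and \eqref{myeqn5}) that the paper leaves implicit, so your write-up is a more complete version of the same argument.
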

\begin{proof} From Proposition \ref{prop1},  choose  $m=p$ functions $S_{i} \in  \Psi$
\begin{equation}
S_{i}=\frac{\partial_{i}g(x,\underaccent{\bar}{\theta})}{f(x,\underaccent{\bar}{\theta})} ,  \quad i=1,\cdots,p
\end{equation}
It is easy to see that $E_{f_{\underaccent{\bar}{\theta}}}(TS_{i})=\partial_{i} \lambda(\underaccent{\bar}{\theta})$, where $i=1,\cdots,p$. Applying Proposition \ref{prop1}, the bound in Equation \eqref{eq:1d} is obtained. The fact that $\mathcal{U}_{f} \subseteq \mathcal{U}_{g}$ ensures that the bound is same for all unbiased estimators $T$ of $\varphi(\underaccent{\bar}{\theta})$. 
$ \hfill \blacksquare $
\end{proof}
Now we give some of the interesting examples in which the Naudt's generalized Cramer-Rao bound is optimal. 
 \begin{example}
Suppose $Y_{1},\cdots,Y_{n}$ are independent uniform random variables in $[0,\theta]$, where $\theta>0$. Then $X = \max \lbrace {Y_{1},\cdots,Y_{n}} \rbrace$ has a pdf  
\begin{equation}
f(x,\theta)=\frac{n x^{n-1}}{\theta^{n-1}}, \quad x \geq \theta
\end{equation}
Now consider an unbiased estimator  $T(X)=\frac{(n+1)X}{n}$ of $\theta$. Then 
\begin{eqnarray}
\mathrm{Var}_{f_{\theta}}(T) &=& \frac{\theta^{2}}{n(n+2)}
\end{eqnarray}
Consider a pdf $g(x,\theta)$ as
\begin{equation}
g(x,\theta)=\frac{n(n+1)(1-\frac{x}{\theta}) x^{n-1}}{\theta^{n}}.
\end{equation}
Using Remark \ref{rem1}, clearly $\mathcal{U}_{f} \subseteq \mathcal{U}_{g}$. Now
\begin{equation}
E_{g_{\theta}}[T] =\lambda(\theta)=\frac{(n+1)\theta}{n+2} \quad \mathrm{and} \quad N(\theta)=\frac{n(n+1)^{2}}{(n+2)\theta^{2}}
\end{equation}
The lower bound in Equation \eqref{eq:1d} is obtained as
\begin{eqnarray}
 \frac{(\lambda'(\theta))^{2}}{N(\theta)} = \frac{\theta^{2}}{n(n+2)}=Var_{f_{\theta}}(T) 
\end{eqnarray}
Thus the estimator $T(X)$ is an unbiased estimator of $\theta$ which attains the generalized Cramer Rao bound by Naudts. When $n=1$, this example reduces to Example 1 given in Naudts (2004). Note that in this case, $\mathrm{Var}_{f_{\theta}}(T)$ does not attain the Hammersley-Chapman-Robbins lower bound.
\end{example}
\begin{example} Suppose $Y_{1},\cdots,Y_{n}$ are independent random variables, 
\begin{equation}
Y_{1},\cdots,Y_{n} \sim \exp ( -(y-\theta)),\; y \geq \theta, \theta >0.
\end{equation}
Then the random variable $X  =\min \lbrace {Y_{1},\cdots,Y_{n}} \rbrace$ has a pdf
\begin{equation}
f(x,\theta)=n \exp(-n(x-\theta)), \; x \geq \theta
\end{equation}
Now consider an unbiased estimator $T(X)=X-\frac{1}{n}$ of $\theta$. Then 
\begin{eqnarray}
Var_{f_{\theta}}(T) &=& \frac{1}{n^{2}}
\end{eqnarray}
Then the pdf $g(x,\theta)$ which optimizes the bound in Equation \eqref{eq:1d} is 
\begin{equation}
g(x,\theta)=n^{2}(x-\theta) \exp(-n(x-\theta)), x \geq \theta
\end{equation}
Using Remark \ref{rem1}, clearly $\mathcal{U}_{f} \subseteq \mathcal{U}_{g}$. Note that $E_{g_{\theta}}[T] =\lambda(\theta)=\frac{1}{n} +\theta$ and the bound in Equation \eqref{eq:1d} is obtained as
\begin{eqnarray}
 \frac{(\lambda'(\theta))^{2}}{N(\theta)}=\frac{1}{n^{2}}=Var_{f_{\theta}}(T) 
\end{eqnarray}
\end{example}
 \begin{example}\textbf{Location family} \\
Let $f(x)$ and $g(x)$ be two density functions on $x\in D'\subseteq \Bbb R$ satisfying \eqref{myeqn1} $  \& $ \eqref{myeqn2}. Now let $X$ be a random variable with density function $f(x,\theta)=f(x-\theta), \theta \in \Bbb R$ and $x\in D\subseteq \Bbb R$. 
Let $g(x,\theta)=g(x-\theta)$. Let $T(X)$ be an unbiased estimator for $\varphi(\theta)$. Let $E_{g}(T)=\lambda(\theta)$.  Then from Equation \eqref{eq1b}, the optimality condition for the  bound in Equation  \eqref{eq:1d} is given by
\begin{eqnarray}
\frac{\partial_{\theta} g(x,\theta)}{f(x,\theta)} =a(\theta)(T(x)-\varphi(\theta)) \label{eq1c}
\end{eqnarray}
for some function $a(\theta)$. In this case
\begin{equation}
\partial_{\theta} g(x,\theta)=-g'(x - \theta)
\end{equation}
where $g'$ denote the derivative of $g(x)$ with respect to  $x$. Then \eqref{eq1c} becomes  
\begin{eqnarray}
 g'(x-\theta) =a(\theta)(\varphi(\theta)-T(x))f(x,\theta)
\end{eqnarray}
Let $\theta=0$ and  $x_{0} \in D'$,  then
\begin{eqnarray}
g(x) &=& a(0) \left( \varphi(0) \int_{x_{0}}^{x} f(x) dx - \int_{x_{0}}^{x} T(x)f(x) dx \right) \\
&=&a(0)h(x)
\end{eqnarray}
where 
\begin{equation}
h(x)= \varphi(0) \int_{x_{0}}^{x} f(x) dx - \int_{x_{0}}^{x} T(x)f(x) dx <\infty
\end{equation}
can be computed since $f(x), T(x), \varphi(0)$ are given.\\
Now $a(0)$ can be solved from the normalization condition $\int_{D'} g(x) dx=1$ as 
\begin{eqnarray}
a(0)=\frac{1}{\int_{D'}h(x) dx} \quad \mathrm{if} \quad \int_{D'}h(x) dx <\infty
\end{eqnarray}
Thus the optimizing family $g(x,\theta)=g(x-\theta)$ is obtained.
\end{example}
\begin{example} \textbf{Scale family} \\
 Let $f(x)$ and $g(x)$ be two density functions on $x\in D'\subseteq \Bbb R$ satisfying \eqref{myeqn1} $  \& $ \eqref{myeqn2}. Now let
\begin{eqnarray}
X \sim f(x,\theta)=\frac{1}{\theta}f(\frac{x}{\theta}) \quad x\in D\subseteq \Bbb R, \theta>0
\end{eqnarray}
and \begin{equation}
g(x,\theta)=\frac{1}{\theta}g(\frac{x}{\theta})
\end{equation}
Let $T(X)$ be an unbiased estimator for $\varphi(\theta)$. Let $E_{g}(T)=\lambda(\theta)$.  Then from \eqref{eq1b},
\begin{eqnarray}
\frac{\partial_{\theta} g(x,\theta)}{f(x,\theta)} =a(\theta)(T(x)-\varphi(\theta)) 
\end{eqnarray}
for some function $a(\theta)$.\\
\begin{eqnarray}
\frac{-x}{\theta^{3}} g'(x/\theta)-\frac{1}{\theta^{2}} g(x/\theta)=a(\theta)(T(x)-\varphi(\theta)) f(x,\theta)
\end{eqnarray}
where $g'$ denotes the derivative of function $g(x)$ with respect to $x$. \\
Let $\theta=1$.  Then we have
\begin{eqnarray}
x g'(x)+g(x)=a(1)(\varphi(1)-T(x)) f(x)
\end{eqnarray}
Let $x_{0} \in D'$. Integrating the above equation from $x_{0}$ to $x$, we get
\begin{eqnarray}
xg(x)-x_{0}g(x_{0}) &=&a(1) \int_{x_{0}}^{x} (\varphi(1)-T(x)) f(x) dx \\
&=&a(1)(h(x)-h(x_{0}))
\end{eqnarray}
where 
\begin{equation}
h(x)-h(x_{0})=\int_{x_{0}}^{x} (\varphi(1)-T(x)) f(x) dx
\end{equation}
Thus we get
\begin{eqnarray}
xg(x)=a(1)h(x) \Rightarrow g(x)=a(1)k(x)
\end{eqnarray}
for some function $k(x)$. \\
Now $a(1)$ can be solved from the normalization condition of the function $\int_{D'}g(x)dx=1$ as 
\begin{eqnarray}
a(1)=\frac{1}{\int_{D'}k(x) dx} \quad \mathrm{if} \quad \int_{D'}k(x) dx <\infty
\end{eqnarray}
Thus the optimizing family $g(x,\theta)=\frac{1}{\theta}g(\frac{x}{\theta})$ is obtained.
\end{example}
\begin{example}
Suppose $Y_{1},\cdots,Y_{n}$ are independent uniform random variables in $[0,\theta]$, where $\theta>0$. Then
\begin{equation}
X=\max \lbrace {Y_{1},\cdots,Y_{n}}\rbrace  \sim f(x,\theta)=\frac{n x^{n-1}}{\theta^{n-1}}
\end{equation}
Now consider an unbiased estimator  $T(X)=\frac{(n+k)X^{k}}{n}$ for $\theta^{k}$, where $k \geq 1$. Then 
\begin{eqnarray}
Var_{f_{\theta}}(T) &=& \frac{k^{2}\theta^{2k}}{n(n+2k)^{2}}
\end{eqnarray}
Now define a pdf $g(x,\theta)$ as 
\begin{equation}
g(x,\theta)=\frac{n(n+k)(1-\frac{x^{k}}{\theta^{k}}) x^{n-1}}{k\theta^{n}}
\end{equation}
Using Remark \ref{rem1}, clearly $\mathcal{U}_{f} \subseteq \mathcal{U}_{g}$. Then the bound in Equation \eqref{eq:1d} is obtained as
\begin{eqnarray}
 \frac{(\lambda'(\theta))^{2}}{N(\theta)}= \frac{k^{2}\theta^{2k}}{n(n+2k)^{2}}=Var_{f_{\theta}}(T) 
\end{eqnarray}
Thus the estimator $T(X)$ is an unbiased estimator of $\theta^{k}$ which attains the bound in Equation \eqref{eq:1d}.
\end{example}
\begin{example} Let $f(x,\theta)$ be the Gamma distribution with a scale parameter $\theta>0$ and a known shape parameter $\alpha >0$, 
\begin{eqnarray}
f(x,\theta)=\frac{1}{\Gamma(\alpha) } \frac{x^{\alpha-1}e^{-x/\theta}}{\theta^{\alpha}} 
\end{eqnarray}
Let $T(X)=\frac{\Gamma(\alpha)}{\Gamma(\alpha+k)} X^{k}$, where $k$ is an integer such that $k \neq 0$ and $2k+\alpha>0$. Then $T$ is an unbiased estimator of $\theta^{k}$ with $E_{f_{\theta}}(T^{2})<\infty$. 
\begin{eqnarray}
\mathrm{Var}_{f_{\theta}}(T) & = & \left[  \frac{\Gamma(\alpha) \Gamma(2k+\alpha)}{(\Gamma(\alpha+k))^{2}} -1 \right] \theta^{2k}
\end{eqnarray}
Consider a pdf $g(x,\theta)$ such that $T$ attains the bound in Equation \eqref{eq:1d} as follows. \\
For $k>0$, 
\begin{equation}
g(x,\theta)=\frac{1}{c} \frac{ e^{-x/\theta}}{\theta} \left[ \sum_{i=0}^{k-1}s_{i} \; (\frac{x}{\theta})^{\alpha+k-(i+2)}  \right],  \quad c=\sum_{i=0}^{k-1} s_{i} \Gamma(\alpha+k-(i+1))
\end{equation}
where $s_{i}=\prod_{j=1}^{i} (\alpha+k-j); i=1,\cdots,k-1$ and $s_{0}=1$.\\
For $k<0$ and $k \neq -1$,
\begin{equation}
g(x,\theta)=\frac{1}{c} \frac{ e^{-x/\theta}}{\theta} \left[ \sum_{i=1}^{k_{1}}s_{i} \; (\frac{x}{\theta})^{\alpha-(i+1)}  \right],  \quad c=\sum_{i=0}^{k_{1}} s_{i} \Gamma(\alpha-i)
\end{equation}
where $k_{1}=-k$, $s_{i}=\prod_{j=1}^{i-1} (\alpha-j); i=2,\cdots,k_{1}$ and $s_{0}=1$.\\
For $k =-1$,
\begin{equation}
g(x,\theta)=\frac{1}{\Gamma(\alpha-1)}\frac{ x^{\alpha-2} e^{-x/\theta}}{\theta^{\alpha-1}}
\end{equation}
This is an interesting special case as $T=1/X$ does not attain the Bhattacharyya bounds of any order while it attains the bound in Equation \eqref{eq:1d}.  
\end{example}
\begin{example} Consider the Normal distribution $\mathcal{N}(0,\theta^{2})$ given by
\begin{eqnarray}
 f(x,\theta)=\frac{1}{\sqrt{2 \pi} \theta} e^{\frac{-x^{2}}{2\theta^{2}}} , \quad  x \in \Bbb R \quad  \mathrm{and } \quad \theta >0
 \end{eqnarray}
 Consider an unbiased estimator $T(X)=\frac{X^{4}}{3}$ for  $\theta^{4}$. Then $ \mathrm{Var}_{f_{\theta}}(T) =\frac{32 \theta^{8}}{3}$. Consider a pdf 
 \begin{eqnarray}
 g(x,\theta)=\frac{1}{\sqrt{2 \pi} \theta} \left(  \frac{3}{4} + \frac{x^{2}}{4 \theta^{2}}\right)  e^{\frac{-x^{2}}{2\theta^{2}}}
 \end{eqnarray}
 Note that 
 \begin{eqnarray}
  N(\theta)=\frac{6}{\theta^{2}} \quad \mathrm{and} \quad \lambda(\theta)= E_{g_{\theta}}(T)=2 \theta^{4}
  \end{eqnarray}
  Thus the bound in Equation \eqref{eq:1d} is obtained as
  \begin{eqnarray}
   \frac{(\lambda'(\theta))^{2}}{N(\theta)}=\frac{32 \theta^{8}}{3}=\mathrm{Var}_{f_{\theta}}(T)
  \end{eqnarray}
  Thus $T$ attains Naudts's bound with optimizing family $g(x,\theta)$. Note that $f(x,\theta)$ belongs to exponential family and $T(X)$ is a second degree polynomial in the canonical statistic $X^{2}$. Hence it attains the Bhattacharyya bound of order $2$. Thus the `first order' bound obtained using $g$ is equal to the second order Bhattacharyya bound. 
\end{example}
 \begin{example} \textbf{Poisson distribution}\\
 Let $X_{1},\cdots,X_{n}$ are i.i.d  random variables from Poisson distribution 
 \begin{eqnarray}
 f(x,\theta)=\frac{\theta^{x} e^{-\theta}}{x !}, \quad x=0,1, \cdots \quad \mathrm{and} \quad \theta >0.
 \end{eqnarray}
 Consider the joint pdf 
 \begin{eqnarray}
 f(x_{1},\cdots, x_{n},\theta)=\frac{\theta^{n \bar{x}} e^{-n \theta}}{x_{1}! \cdots x_{n}!}, \quad \mathrm{where} \quad \bar{x}= \frac{x_{1}+\cdots+x_{n}}{n}.
 \end{eqnarray}
Consider an unbiased estimator $T(X)=\bar{X} (\bar{X}-\frac{1}{n})$ for $\theta^{2}$. 
$T$ attains the bound in Equation \eqref{eq:1d} if we choose the pdf 
\begin{eqnarray}
g(x_{1},\cdots, x_{n},\theta)=\frac{1}{2} \frac{\theta^{n \bar{x}} e^{-n \theta}}{x_{1}! \cdots x_{n}!} + \frac{\bar{x}}{2}  \frac{  \theta^{n \bar{x}-1} e^{-n \theta}}{x_{1}! \cdots x_{n}!}.
\end{eqnarray}
Note that $\mathrm{Var}_{f_{\theta}}(T)$ attains the Bhattacharyya bound of order $2$ while it attains `first order' Naudts's bound. 
 \end{example}
 \begin{example} Let $X_{1},\cdots,X_{n}$ are i.i.d uniform random variables in $[0,\theta]$, where $\theta>0$.  Then the joint pdf is
 \begin{equation}
 f(x_{1},\cdots,x_{n}, \theta)= \frac{1}{\theta^{n}} \;  \Pi_{i=1}^{n} \mathbf{1}_{ \{0 \leq x_{i} \leq \theta \} } 
 \end{equation}
 where $\mathbf{1}$ denotes the indicator function.\\
 Note that $T = \max \lbrace {X_{1},\cdots,X_{n}} \rbrace$  is a sufficient statistic with $E_{f_{\theta}}(T)=\frac{n}{n+1} \theta$ and $\mathrm{Var}_{f_{\theta}}(T)$ attains the bound in Equation \eqref{eq:1d} if we choose the pdf
 \begin{eqnarray}
 g(x_{1},\cdots,x_{n}, \theta)=\frac{n+1}{\theta^{n}} (1-\frac{t}{\theta}); \; 0 \leq t \leq \theta \quad \mathrm{where} \quad t=\max \lbrace {x_{1},\cdots,x_{n}} \rbrace.
 \end{eqnarray}
Note that $g(x_{1},\cdots,x_{x}, \theta)$ can be written as 
\begin{eqnarray}
g(x_{1},\cdots,x_{x},\theta)=Z\left( \frac{n+1}{\theta^{n}} -\frac{(n+1)t}{\theta^{n+1}}-1 \right) 
\end{eqnarray}
where the $Z$ is a function defined by  $Z(u)=[1+u]_{+}$, with $[v]_{+}=\max \{v,0 \}$ and $F(u)=u-1$ is the inverse function of $Z$. \\
Such family  $\lbrace g(x,\theta) \vert \theta \in \Theta \rbrace$ is called a deformed exponential family with a deformed logarithm function $F$  and  deformed exponential function  $Z$ (refer Naudts(2004) for more details). From the Proposition 5.2, Naudts (2004), it can be easily seen that $f(x,\theta)$ is the $F$-escort distribution so that the variance of the sufficient statistic $T$ attains the Naudts's bound.  
 \end{example}
 \begin{remark}{\label{remdefor}} Deformed exponential family is a generalization of exponential family in which the deformed logarithm of the density function is a linear function of the statistic $T$. In exponential family the statistic $T$ is sufficient and complete under some conditions. As in exponential family, $T$ is sufficient in deformed exponential family also. For statistical applications, the definition of deformed exponential family should include the requirement that $T$ is a complete statistic. \\
 In the above example, $g$ is a deformed exponential family while this is not the case in most of the other examples. However, $\mathrm{Var}_{f_{\theta}}(T)$ attains the bound given by Naudts (2004). 
 \end{remark}
\section{Generalized Bhattacharyya Bounds}
In this section, we obtain an information inequality which generalizes the Bhattacharyya bound given by Fraser and Guttman (1952). This is defined using the divided difference of a density function $g(x,\theta)$ satisfying the conditions \eqref{myeqn1} $  \& $ \eqref{myeqn2}. We begin by recalling the definition of the divided difference formula.
\subsection{One parameter case}
\begin{definition}Let $h(\theta)$ be a scalar function of $\theta \in \Theta \subseteq  \Bbb R$.  Let $k \geq  1$ be a positive integer.  Let us define the divided difference of the function $h$ at $k+1$ nodes $\theta^{0},\cdots, \theta^{k}$. We have $k+1$ data points,
\begin{eqnarray}
(\theta^{0}, h(\theta^{0})), \cdots, (\theta^{k}, h(\theta^{k}))
\end{eqnarray}
Define the first divided difference of $h$ as
\begin{eqnarray}
\underset{\theta^{\nu+1}}{\Delta} h(\theta^{\nu})  &:=& \frac{h(\theta^{\nu+1})-h(\theta^{\nu})}{\theta^{\nu+1}-\theta^{\nu}}; \;\;\nu =0 ,\cdots k-1 
\end{eqnarray}
Second divided difference is given by
\begin{eqnarray}
\underset{\theta^{\nu+1}, \theta^{\nu+2}}{\Delta^{2}} h(\theta^{\nu}) & := & \underset{\theta^{\nu+2}}{\Delta} \; \underset{\theta^{\nu+1}}{\Delta}  h(\theta^{\nu})= \frac{\underset{\theta^{\nu+2}}{\Delta} h(\theta^{\nu +1})-\underset{\theta^{\nu+1}}{\Delta} h(\theta^{\nu})}{\theta^{\nu+2}-\theta^{\nu}} \\
& & \; \mathrm{where} \;  \nu =0 ,\cdots k-2  \nonumber 
\end{eqnarray}
In general, for $j \geq 2$, the $j^{\mathrm{th}}$-divided difference is defined as
\begin{eqnarray}
\underset{\theta^{\nu+1}, \cdots, \theta^{\nu+j}}{\Delta^{j}} h(\theta^{\nu}) & := & \underset{\theta^{\nu+j}}{\Delta} \cdots \underset{\theta^{\nu+1}}{\Delta}  h(\theta^{\nu})= \frac{\underset{\theta^{\nu+2}, \cdots, \theta^{\nu+j}}{\Delta^{j-1}} h(\theta^{\nu +1})- \underset{\theta^{\nu+1}, \cdots, \theta^{\nu+j-1}}{\Delta^{j-1}} h(\theta^{\nu})}{\theta^{\nu+j}-\theta^{\nu}} \nonumber \\
& & \; \mathrm{where} \; \nu= 0 \cdots k-j;
\end{eqnarray}
\end{definition}
Choose and fix $\theta^{0}$ in $\Theta$.  For convenience, we write $g_{x}(\theta)$ instead of $g(x,\theta)$. Let $T(X)$ be an unbiased estimator of a real valued function $\varphi(\theta)$ of $\theta$. Then consider $i^{\mathrm{th}}$ divided difference of the density $g(x,\theta)$ on $k+1$ nodes of $\theta^{0},\cdots,\theta^{k}$, where $i=1,\cdots,k$. Define 
\begin{eqnarray}
S_{i}= \frac{1}{f(x,\theta^{0})} \underset{\theta^{1}, \cdots, \theta^{i}}{\Delta^{i}} g_{x}(\theta^{0}), \; i=1,\cdots k
\end{eqnarray}
We now give a lower bound for the variance of $T$ using these functions. 
\begin{theorem}{\label{nonreg}}
Let $g(x,\theta)$ be a density function satisfying conditions \eqref{myeqn1} $  \& $ \eqref{myeqn2}  with  $E_{g_{\theta^{0}}}[T]=\lambda(\theta^{0})$. For $T(X) \in \mathcal{C}_{\varphi}$, the variance of $T$ satisfies  
 \begin{eqnarray}
\mathrm{Var}_{f_{\theta^{0}}}(T(X)) \geq  \sup_{\theta^{1},\cdots, \theta^{k}}  M^{\intercal} \Sigma^{-1} M \label{eq:1e}
\end{eqnarray}
where $M=\left(  \underset{\theta^{1}}{\Delta} \lambda(\theta^{0}),\cdots,  \underset{\theta^{1}, \cdots, \theta^{i}}{\Delta^{k}} \lambda(\theta^{0}) \right) ^{\intercal}$,  $ \underset{\theta^{1}, \cdots, \theta^{i}}{\Delta^{i}} \lambda(\theta^{0})$ is the $i^{\mathrm{th}}$ divided difference of $\lambda$, $i=1,\cdots,k$ and  $\Sigma=(\Sigma_{ij})$ is the covariance matrix of the column vector $S=(S_{1},\cdots,S_{k})^{\intercal}$.
\end{theorem}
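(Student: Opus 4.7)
The plan is to apply Proposition \ref{prop1} at the fixed point $\theta^{0}$ with the specific choices $S_{1},\dots,S_{k}$ given before the theorem statement. The crucial observation is that the $i$-th divided difference $\underset{\theta^{1},\dots,\theta^{i}}{\Delta^{i}} h(\theta^{0})$ is a finite linear combination of the values $h(\theta^{0}),\dots,h(\theta^{i})$, so it commutes with the $x$-integral by sheer linearity. This is what lets the argument go through in the non-regular case, without requiring the differentiation-under-integral-sign hypothesis \eqref{myeqn5} used in the previous section.

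First I would check that each $S_{i}$ lies in $\Psi$ at $\theta=\theta^{0}$. For the zero-mean property, writing the divided difference as a linear combination and interchanging it with integration,
\begin{equation*}
E_{f_{\theta^{0}}}(S_{i}) = \int \underset{\theta^{1},\dots,\theta^{i}}{\Delta^{i}} g_{x}(\theta^{0})\,dx = \underset{\theta^{1},\dots,\theta^{i}}{\Delta^{i}} \left[\int g(x,\theta)\,dx\right]_{\theta=\theta^{0}} = 0,
\end{equation*}
since the bracketed integral equals $1$ for every $\theta$ and divided differences of constants vanish. For the orthogonality to $\mathcal{U}_{f}$, take $U\in\mathcal{U}_{f}$; assumption \eqref{myeqn2} gives $U\in\mathcal{U}_{g}$, so $\int U(x) g(x,\theta)\,dx=0$ for every $\theta$, and the same interchange yields $E_{f_{\theta^{0}}}(U S_{i})=0$, hence $\mathrm{Cov}_{f_{\theta^{0}}}(U,S_{i})=0$. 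The absolute continuity \eqref{myeqn1} ensures that $S_{i}$ is well-defined $f$-a.e.

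Next I would compute the quantities $\lambda_{i}(\theta^{0})$ appearing in Proposition \ref{prop1} by the same interchange,
\begin{equation*}
E_{f_{\theta^{0}}}(T\, S_{i}) = \underset{\theta^{1},\dots,\theta^{i}}{\Delta^{i}} \left[\int T(x) g(x,\theta)\,dx\right]_{\theta=\theta^{0}} = \underset{\theta^{1},\dots,\theta^{i}}{\Delta^{i}} \lambda(\theta^{0}),
\end{equation*}
which are exactly the components of the vector $M$ in the theorem. Proposition \ref{prop1} then delivers
\begin{equation*}
\mathrm{Var}_{f_{\theta^{0}}}(T) \geq M^{\intercal}\Sigma^{-1}M
\end{equation*}
for every fixed choice of nodes $\theta^{1},\dots,\theta^{k}$, and taking the supremum over these nodes produces \eqref{eq:1e}.

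The main potential obstacle is not conceptual but technical: one needs $E_{f_{\theta^{0}}}(S_{i}^{2})<\infty$ so that $\Sigma$ exists, and $\Sigma$ must be non-singular so the $\sup_{\alpha}\alpha^{\intercal}MM^{\intercal}\alpha/\alpha^{\intercal}\Sigma\alpha$ step giving $M^{\intercal}\Sigma^{-1}M$ is valid. These are implicit regularity conditions on the nodes $\theta^{1},\dots,\theta^{k}$, analogous to the non-singularity hypothesis \eqref{myeqn4} in the regular case; the supremum over nodes is understood to range over configurations for which $\Sigma$ is invertible. Apart from this caveat, everything reduces to the linearity trick above plus the already-proved Proposition \ref{prop1}.
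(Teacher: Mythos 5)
Your proposal is correct and follows essentially the same route as the paper: express each divided difference as a linear combination of $g_x(\theta^{0}),\dots,g_x(\theta^{i})$, interchange with the integral to verify $E_{f_{\theta^{0}}}(S_i)=0$ and $E_{f_{\theta^{0}}}(TS_i)=\underset{\theta^{1},\dots,\theta^{i}}{\Delta^{i}}\lambda(\theta^{0})$, then invoke Proposition \ref{prop1} and take the supremum over nodes. You are in fact slightly more careful than the paper, since you explicitly verify the orthogonality to $\mathcal{U}_{f}$ needed for $S_i\in\Psi$ and flag the implicit second-moment and non-singularity requirements on $\Sigma$.
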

\begin{proof}
Note that 
\begin{eqnarray}
E_{f_{\theta^{0}}}[S_{i}] & = & \int  \underset{\theta^{1}, \cdots, \theta^{i}}{\Delta^{i}} g_{x}(\theta^{0}) dx\\
&=& \int \left( \sum_{j=0}^{i} \frac{g_{x}(\theta^{j})}{\prod_{l \neq j}(\theta^{j}-\theta^{l})} \right) dx \\
&=& \sum_{j=0}^{i}  \frac{1}{\prod_{l \neq j}(\theta^{j}-\theta^{l})} =0
\end{eqnarray}
Also we have
\begin{eqnarray}
E_{f_{\theta^{0}}}[T S_{i}] & =& \int T(x)  \underset{\theta^{1}, \cdots, \theta^{i}}{\Delta^{i}} g_{x}(\theta^{0}) dx \\
&= & \int T(x) \left( \sum_{j=0}^{i} \frac{g_{x}(\theta^{j})}{\prod_{l \neq j}(\theta^{j}-\theta^{l})} \right) dx \\
& = & \sum_{j=0}^{i} \frac{1}{\prod_{l \neq j}(\theta^{j}-\theta^{l})} \int T(x) g_{x}(\theta^{j}) dx\\
& = & \sum_{j=0}^{i} \frac{\lambda(\theta^{j})}{\prod_{l \neq j}(\theta^{j}-\theta^{l})} =   \underset{\theta^{1}, \cdots, \theta^{i}}{\Delta^{i}} \lambda(\theta^{0})
\end{eqnarray}
where $  \underset{\theta^{1}, \cdots, \theta^{i}}{\Delta^{i}} \lambda(\theta^{0})$ is the $i^{\mathrm{th}}$ divided difference of the function $\lambda$. \\
Hence it follows that $S_{i} \in \Psi, \quad i=1,\cdots, k$ .  Apply Proposition \ref{prop1} for 
 $S_{i}$ to obtain the bound in Equation \eqref{eq:1e}  with $M=\left(  \underset{\theta^{1}}{\Delta} \lambda(\theta^{0}),\cdots,  \underset{\theta^{1}, \cdots, \theta^{i}}{\Delta^{k}} \lambda(\theta^{0}) \right) ^{\intercal}$.  \hfill $\blacksquare$
\end{proof}
Let us define 
\begin{eqnarray}
S_{i}=\frac{ g^{i}(x,\theta)}{f(x,\theta)}, \; i=1,\cdots k.
\end{eqnarray}
where $g^{i}(x,\theta)$ denote the $i^{\mathrm{th}}$ derivative of $g(x,\theta)$ with respect to $\theta$.
\begin{theorem} 
Let $X$ be a random vector with pdf $f(x,\theta)$.  Let $g(x,\theta)$ be a pdf satisfying \eqref{myeqn1} $  \& $ \eqref{myeqn2} with $E_{g_{\theta}}[T]=\lambda(\theta)$. 
Assume that
 \begin{enumerate}[label=(\alph*)]
\item  $g(x,\theta) $ and the function $\lambda(\theta)$ are $k$-times differentiable for all $x \in A$ and $\theta \in \Theta$. \eqnum\label{myeqn3}
\item $0< \mathrm{Var}_{f_{\theta}}(S_{i}) < \infty$ and the covariance matrix  $\Sigma=(\Sigma_{ij})$ of the column vector $S=(S_{1},\cdots,S_{k})^{\intercal}$  is non-singular. \eqnum\label{myeqn4}
\item derivatives of functions of $\theta$ expressed as integrals with respect to $g(x,\theta)$ can be obtained by differentiating under the integral sign. \eqnum\label{myeqn5}
\end{enumerate}
Then for $T(X) \in \mathcal{C}_{\varphi}$, the variance of $T$ satisfies  
 \begin{eqnarray}
\mathrm{Var}_{f_{\theta}}(T(X)) \geq   M^{\intercal} \Sigma^{-1} M \label{eq:2ee}
\end{eqnarray}
where $M=(\lambda^{1}(\theta),\cdots, \lambda^{k}(\theta))^{\intercal}$,  $\lambda^{i}(\theta)$ is the $i^{\mathrm{th}}$ derivative of $\lambda$, $i=1,\cdots,k$.
\end{theorem}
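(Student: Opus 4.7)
The plan is to apply Proposition~\ref{prop1} to the $k$ statistics $S_i(x,\theta) = g^i(x,\theta)/f(x,\theta)$, $i=1,\ldots,k$, defined just above the theorem. Once each $S_i$ is shown to belong to $\Psi$ and the cross moments are identified as $E_{f_\theta}(T S_i) = \lambda^i(\theta)$, the bound~\eqref{eq:2ee} is a direct consequence of~\eqref{eq1a} with $M = (\lambda^1(\theta),\ldots,\lambda^k(\theta))^\intercal$ and $\Sigma$ the covariance matrix of $S=(S_1,\ldots,S_k)^\intercal$ under $f_\theta$.

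To show $S_i \in \Psi$ I would verify the three defining conditions. For the mean, the absolute continuity~\eqref{myeqn1} gives $E_{f_\theta}(S_i) = \int g^i(x,\theta)\,dx$, which by hypothesis~(c) equals $\partial_\theta^i \int g(x,\theta)\,dx = 0$. Finite variance of $S_i$ under $f_\theta$ is part of hypothesis~(b). For orthogonality to $\mathcal{U}_f$, any $U \in \mathcal{U}_f$ lies in $\mathcal{U}_g$ by~\eqref{myeqn2}, so $\int U(x)\, g(x,\theta)\,dx$ vanishes identically in $\theta$; differentiating $i$ times under the integral gives $E_{f_\theta}(U S_i) = \int U g^i\,dx = 0$, and combined with the mean-zero property this yields $\mathrm{Cov}_{f_\theta}(U, S_i) = 0$.

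The cross moment with $T$ is handled by the same device applied to $\lambda(\theta) = \int T(x)\, g(x,\theta)\,dx$: differentiating $i$ times under the integral produces $E_{f_\theta}(T S_i) = \int T(x)\, g^i(x,\theta)\,dx = \lambda^i(\theta)$. Invoking Proposition~\ref{prop1} then gives~\eqref{eq:2ee}. The only substantive hypothesis is the threefold use of~(c); it underlies the identifications of the mean of $S_i$, its covariance with elements of $\mathcal{U}_f$, and the cross moment with $T$, so the proof is essentially a direct transcription of the single-derivative argument from the first theorem of Section~3 to the multi-derivative setting.
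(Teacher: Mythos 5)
Your proposal is correct and follows essentially the same route as the paper: define $S_i = g^i(x,\theta)/f(x,\theta)$, verify $E_{f_\theta}(S_i)=0$ and $E_{f_\theta}(TS_i)=\lambda^i(\theta)$ by differentiating under the integral, conclude $S_i\in\Psi$, and invoke Proposition~\ref{prop1}. You actually supply more detail than the paper does (notably the explicit check that $\mathrm{Cov}_{f_\theta}(U,S_i)=0$ for $U\in\mathcal{U}_f$ via \eqref{myeqn2}, which the paper leaves implicit), but the argument is the same.
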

\begin{proof}  Note that
$E_{f_{\theta}}[S_{i}]=0$ and $E_{f_{\theta}}[TS_{i}]= \lambda^{i}(\theta), \; \mathrm{where} \; i=1,\cdots ,k$. Hence $S_{i} \in \Psi$ and now apply  Proposition \ref{prop1} for $S_{i}$ to obtain the bound in Equation \eqref{eq:1e} with  $M=(\lambda^{1}(\theta),\cdots, \lambda^{k}(\theta))^{\intercal}$. \hfill $\blacksquare$
\end{proof}
\begin{remark} Note that by assuming appropriate regularity  assumptions the above theorem can also be obtained from  Theorem \ref{nonreg} as a limiting case.  
\end{remark}
\begin{remark} When $g=f$, Equation \eqref{eq:1e} reduces to the Bhattacharyya bounds of order $k$ given by Fraser and Guttman (1952) and for  $k=1$, it gives the Hammersley Chapman-Robbins bound. When $k=1$, Equation \eqref{eq:2ee} reduces to Naudts's generalized Cramer-Rao bound and when $g=f$, it reduces to the classical Bhattacharyya bounds of order $k$ in regular case. 
\end{remark}
\subsection{ Multiparameter case}
Let $X \sim f(x,\underaccent{\bar}{\theta})$, where $ \underaccent{\bar}{\theta} =(\theta_{1},\cdots,\theta_{p})^{\intercal} \in \Theta \in \Bbb R^{p}$. Let $T(X)$ be an unbiased estimator of a real valued function $\varphi(\underaccent{\bar}{\theta})$ of $\underaccent{\bar}{\theta}$.  Let $g(x,\underaccent{\bar}{\theta})$ be a density function parametrized by $\underaccent{\bar}{\theta}$ satisfying \eqref{myeqn1} $  \& $ \eqref{myeqn2}. Let the expectation of $T(X)$ with respect to $g(x,\theta)$ is $\lambda(\underaccent{\bar}{\theta})$, a real valued function of $\underaccent{\bar}{\theta}$, i.e. $E_{g_{\underaccent{\bar}{\theta}}}(T)=\lambda(\underaccent{\bar}{\theta})$. \\ 
Let $k \geq 1$ be an integer. Let $\mathbf{i}=(i_{1},\cdots,i_{p})$ such that $i_{j} \geq 0$, $0< i_{1}+\cdots+i_{p} \leq k$. 
Assume  the density function $g(x,\underaccent{\bar}{\theta})$ and   $\lambda(\underaccent{\bar}{\theta})$ have all partial derivatives with respect to $\theta_{1},\cdots, \theta_{p}$ of order up to $k$ and $k^{\mathrm{th}}$-order partial derivatives are continuous. Define
\begin{eqnarray}
\partial^{\mathbf{i}} :=\frac{\partial^{\mid \mathbf{i} \mid}}{\partial \theta^{i_{1}} \cdots \partial \theta^{i_{p}}}\quad \mathrm{where} \quad \mid \mathbf{i} \mid:= i_{1}+\cdots+i_{p}.
\end{eqnarray}
Define functions
\begin{eqnarray}
S^{\mathbf{i}} := \frac{1}{f(x,\underaccent{\bar}{\theta} )} \partial^{\mathbf{i}} g(x,\underaccent{\bar}{\theta}) ; \quad \lambda^{\mathbf{i}} (\underaccent{\bar}{\theta}) :=\partial^{\mathbf{i}} \lambda(\underaccent{\bar}{\theta})
\end{eqnarray}
We have the following theorem.
\begin{theorem} 
For  $T(X) \in \mathcal{C}_{\varphi}$, the variance of $T$ satisfies  
 \begin{eqnarray}
\mathrm{Var}_{f_{\underaccent{\bar}{\theta}}}(T(X)) \geq   M^{\intercal} \Sigma^{-1}(\underaccent{\bar}{\theta}) M \label{eq:11d}
\end{eqnarray}
where $\Sigma$ is the covariance matrix of the column vector $S=(S^{\mathbf{i}})$ containing all possible $S^{\mathbf{i}}$ and $M=( \lambda^{\mathbf{i}} (\underaccent{\bar}{\theta}))$ is a column  vector containing all possible  $\lambda^{\mathbf{i}}(\underaccent{\bar}{\theta})$.
\end{theorem}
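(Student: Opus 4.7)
The plan is to imitate the one-parameter Bhattacharyya-type argument already used in Theorem 4.3 (the $k$-th derivative version), replacing single derivatives $g^{(i)}$ by mixed partials $\partial^{\mathbf{i}} g$ indexed by multi-indices $\mathbf{i}$ with $0<|\mathbf{i}|\le k$, and then invoke Proposition \ref{prop1}. So I only need to verify that the candidate functions $S^{\mathbf{i}}=\partial^{\mathbf{i}}g/f$ lie in the class $\Psi$ and compute their covariances with $T$; the bound then follows directly from Proposition \ref{prop1}.

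The first step is to check $E_{f_{\underaccent{\bar}{\theta}}}[S^{\mathbf{i}}]=0$. Using the differentiation-under-the-integral-sign hypothesis (the natural multiparameter analogue of \eqref{myeqn5}), I interchange $\partial^{\mathbf{i}}$ and $\int$ to get $E_{f_{\underaccent{\bar}{\theta}}}[S^{\mathbf{i}}]=\int \partial^{\mathbf{i}} g(x,\underaccent{\bar}{\theta})\,dx=\partial^{\mathbf{i}}\!\int g(x,\underaccent{\bar}{\theta})\,dx=\partial^{\mathbf{i}}1=0$. The second step is the covariance step: for any $U\in \mathcal{U}_f$, assumption \eqref{myeqn2} gives $U\in\mathcal{U}_g$, so $E_{g_{\underaccent{\bar}{\theta}}}[U]=0$ for every $\underaccent{\bar}{\theta}$; differentiating this identity by $\partial^{\mathbf{i}}$ under the integral sign yields $\mathrm{Cov}_{f_{\underaccent{\bar}{\theta}}}(U,S^{\mathbf{i}})=\int U\,\partial^{\mathbf{i}} g\,dx=\partial^{\mathbf{i}} E_{g_{\underaccent{\bar}{\theta}}}[U]=0$. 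Together with the implicit variance assumption coming from the nonsingularity of $\Sigma$, this shows $S^{\mathbf{i}}\in \Psi$ for every admissible multi-index $\mathbf{i}$.

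The third step computes $E_{f_{\underaccent{\bar}{\theta}}}[T\,S^{\mathbf{i}}]$. Again by differentiation under the integral sign,
\begin{equation*}
E_{f_{\underaccent{\bar}{\theta}}}[T\,S^{\mathbf{i}}]=\int T(x)\,\partial^{\mathbf{i}} g(x,\underaccent{\bar}{\theta})\,dx=\partial^{\mathbf{i}}\!\int T(x)\, g(x,\underaccent{\bar}{\theta})\,dx=\partial^{\mathbf{i}}\lambda(\underaccent{\bar}{\theta})=\lambda^{\mathbf{i}}(\underaccent{\bar}{\theta}).
\end{equation*}
This identifies the coordinates of the vector $M$ in Proposition \ref{prop1} with $\lambda^{\mathbf{i}}(\underaccent{\bar}{\theta})$ and the matrix $\Sigma$ with the covariance matrix of $S=(S^{\mathbf{i}})$. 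Enumerating the collection $\{\mathbf{i}:0<|\mathbf{i}|\le k\}$ in any fixed order to form the column vector $S$ and the corresponding column vector $M$, Proposition \ref{prop1} delivers \eqref{eq:11d} verbatim.

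The only genuine subtlety — and the main bookkeeping obstacle — is making sure the multi-index enumeration used to build $S$ and $M$ is consistent so that $\Sigma=\mathrm{Cov}_{f_{\underaccent{\bar}{\theta}}}(S,S^{\intercal})$ is really the square matrix whose inverse appears in the bound; nonsingularity of this $\Sigma$ is a standing hypothesis implicit in writing $\Sigma^{-1}(\underaccent{\bar}{\theta})$, and no extra work is needed beyond invoking it. Everything else is a straightforward transcription of the regular one-parameter proof, with single-variable derivatives $\partial^i$ replaced by mixed partials $\partial^{\mathbf{i}}$ and with $\mathcal{U}_f\subseteq\mathcal{U}_g$ ensuring, as in the earlier theorems, that the lower bound is the same for every $T\in\mathcal{C}_{\varphi}$.
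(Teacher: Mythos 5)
Your proof is correct and follows essentially the same route as the paper: verify that each $S^{\mathbf{i}}=\partial^{\mathbf{i}}g/f$ lies in $\Psi$, compute $E_{f_{\underaccent{\bar}{\theta}}}[TS^{\mathbf{i}}]=\lambda^{\mathbf{i}}(\underaccent{\bar}{\theta})$ by differentiating under the integral sign, and invoke Proposition \ref{prop1}. If anything, you are slightly more careful than the paper, which asserts $S^{\mathbf{i}}\in\Psi$ after checking only the zero-mean condition, whereas you also verify the orthogonality $\mathrm{Cov}_{f_{\underaccent{\bar}{\theta}}}(U,S^{\mathbf{i}})=0$ for $U\in\mathcal{U}_f$ via the inclusion $\mathcal{U}_f\subseteq\mathcal{U}_g$.
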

\begin{proof}
Note that
 \begin{eqnarray}
E_{f_{\underaccent{\bar}{\theta}}}(S^{\mathbf{i}} ) &=& \int \partial^{\mathbf{i}} g(x,\underaccent{\bar}{\theta}) dx = 0 \\
\mathrm{Cov}_{f_{\underaccent{\bar}{\theta}}}(T,S^{\mathbf{i}} ) &= & \int T(x) \; \partial^{\mathbf{i}} g(x,\theta)\; dx = \lambda^{\mathbf{i}} (\underaccent{\bar}{\theta})
\end{eqnarray}
Hence for all $\mathbf{i}$, we have $S^{\mathbf{i}} \in  \Psi$. Now apply Proposition \ref{prop1} for  $S^{\mathbf{i}} \in  \Psi$ to obtain the bound.  \hfill $\blacksquare$
\end{proof}
\begin{remark} If $\mid \mathbf{i} \mid =1$, the bound in Equation \eqref{eq:11d} reduces to Naudts's bound in vector parameter. 
\end{remark}
\begin{note}  If the density $g(x,\underaccent{\bar}{\theta})$ is not regular, we can obtain an information inequality by replacing the partial derivatives by the corresponding divided difference formula. This is done as follows.\\
Consider a scalar function  $h(\underaccent{\bar}{\theta})$ of $\underaccent{\bar}{\theta}=( \theta_{1},\cdots,\theta_{p})$.  Let $k \geq 1$ be an integer. Let  us consider $k+1$ nodes of $\underaccent{\bar}{\theta}$ say, $\underaccent{\bar}{\theta}^{0}=( \theta_{1}^{0},\cdots,\theta_{p}^{0}),\cdots, \underaccent{\bar}{\theta}^{k}=(\theta_{1}^{k},\cdots,\theta_{p}^{k}) $. Define  
 \begin{eqnarray}
 \underaccent{\bar}{\theta}_{i}^{\nu}=( \theta_{1}^{\nu},\cdots,\theta_{i}^{\nu +1},\cdots  \theta_{p}^{\nu})
 \end{eqnarray}
Define the first divided difference of $h$ as
 \begin{eqnarray}
 \underset{\theta^{\nu+1}_{i}}{\Delta} h(\underaccent{\bar}{\theta} ^{\nu})=\frac{ h(\underaccent{\bar}{\theta}_{i}^{\nu}) - h(\underaccent{\bar}{\theta} ^{\nu})}{\theta_{i}^{\nu+1}-\theta_{i}^{\nu}}
 \end{eqnarray}
 where $\nu=0,\cdots,k-1$, $j=1$, $i=1,\cdots,p$. \\
In general, for $j \geq 2$, define $j^{\mathrm{th}}$ divided difference of $h$ as
 \begin{eqnarray}
 \underset{\theta_{i}^{\nu+1}, \cdots, \theta_{i}^{\nu+j}}{\Delta^{j}} h(\underaccent{\bar}{\theta^{\nu}}) & := & \underset{\theta_{i}^{\nu+j}}{\Delta} \cdots \underset{\theta_{i}^{\nu+1}}{\Delta}  h(\underaccent{\bar}{\theta^{\nu}})= \frac{\underset{\theta_{i}^{\nu+2}, \cdots, \theta_{i}^{\nu+j}}{\Delta^{j-1}} h(\underaccent{\bar}{\theta^{\nu +1}})- \underset{\theta_{i}^{\nu+1}, \cdots, \theta_{i}^{\nu+j-1}}{\Delta^{j-1}} h(\underaccent{\bar}{\theta^{\nu})}}{\theta_{i}^{\nu+j}-\theta_{i}^{\nu}} \nonumber \\
& & \; \mathrm{where} \; \nu= 0 \cdots k-j; \; j=1,\cdots,k; \;  i=1,\cdots,p
 \end{eqnarray}
\end{note} 
In many cases, one may be interested in estimating a vector valued function $\Phi(\underaccent{\bar}{\theta})$ of $\underaccent{\bar}{\theta}$. Let $\mathsf{T}=(T_{1},\cdots, T_{r})^{\intercal}$ be an unbiased estimator of $\Phi(\underaccent{\bar}{\theta}) =(\varphi_{1}(\underaccent{\bar}{\theta}),\cdots, \varphi_{r}(\underaccent{\bar}{\theta}))^{\intercal}$, where $r \leq p$. That is $E_{f_{\underaccent{\bar}{\theta}}}(T_{i})=\varphi_{i}(\underaccent{\bar}{\theta})$, $i=1,\cdots, r$. Let us consider $\mathsf{S}=(S_{1},\cdots, S_{m})^{\intercal}$, where functions  $S_{i} \in \Psi$, $i=1,\cdots,m.$ Let us assume that  the covariance matrix $\Sigma$ of $(r+m) \times 1$ vector $(\mathsf{T}, \mathsf{S})$ is positive definite. We have
\begin{equation}
\Sigma= \left[ \begin{array}{cc}
\Sigma_{\mathsf{T}} & \Sigma_{\mathsf{T} \mathsf{S}}  \\
                     \Sigma_{\mathsf{S} \mathsf{T}}  & \Sigma_{ \mathsf{S}}
\end{array} \right ]
 \end{equation} 
 where $ \Sigma_{\mathsf{T}}$ is the covariance matrix of $r \times 1$ vector $\mathsf{T}$, $ \Sigma_{\mathsf{S}}$ is the covariance matrix of $m \times 1$ vector $\mathsf{S}$ and $\Sigma_{\mathsf{T} \mathsf{S}} $ is the covariance  matrix between $\mathsf{T}$ and $\mathsf{S}$.
 If the covariance matrix $\Sigma_{\mathsf{S}}$ is invertible, the Shur complement of $\Sigma_{ \mathsf{S}}$ in $\Sigma$ is given by $\Sigma_{\mathsf{T}}-\Sigma_{\mathsf{T} \mathsf{S}} \Sigma^{-1}_{\mathsf{S}} \Sigma_{\mathsf{S} \mathsf{T}} $. It is easy to see that  $\Sigma_{\mathsf{T}}-\Sigma_{\mathsf{T} \mathsf{S}} \Sigma^{-1}_{\mathsf{S}} \Sigma_{\mathsf{S} \mathsf{T}} $ is positive definite since $\Sigma$ is positive definite. This can be written as
 \begin{eqnarray}
 \Sigma_{\mathsf{T}}-\Sigma_{\mathsf{T} \mathsf{S}} \Sigma^{-1}_{\mathsf{S}} \Sigma_{\mathsf{S} \mathsf{T}} \succ 0
 \end{eqnarray}
Equivalently, one can write 
 \begin{eqnarray}
 \Sigma_{\mathsf{T}} \succ \Sigma_{\mathsf{T} \mathsf{S}} \Sigma^{-1}_{\mathsf{S}} \Sigma_{\mathsf{S} \mathsf{T}}
 \end{eqnarray}
 which means that $\Sigma_{\mathsf{T}}-\Sigma_{\mathsf{T} \mathsf{S}} \Sigma^{-1}_{\mathsf{S}} \Sigma_{\mathsf{S} \mathsf{T}}$ is positive definite. \\
 The above inequality can be interpreted as follows. Consider a  linear estimator $\alpha^{\intercal} T$ which is unbiased for $\alpha^{\intercal} \Phi(\underaccent{\bar}{\theta})$. Then 
 \begin{eqnarray}
 \mathrm{Var}_{f_{\underaccent{\bar}{\theta}}} (\alpha^{\intercal} T) \geq \alpha^{\intercal} J(\underaccent{\bar}{\theta}) \alpha.
 \end{eqnarray}
where  $J(\theta)=\Sigma_{\mathsf{T} \mathsf{S}} \Sigma^{-1}_{\mathsf{S}} \Sigma_{\mathsf{S} \mathsf{T}}$.\\
 If  $\Sigma_{\mathsf{T}}=J(\theta)$, then variance of $\alpha^{\intercal} T$ attains this bound. That is, $\alpha^{\intercal} T$ is the minimum variance unbiased estimator for $\alpha^{\intercal} \Phi(\underaccent{\bar}{\theta})$ for any $\alpha$.
 \newpage
 \section{Discussions}
 In Proposition 5.2, Naudts (2004), it is shown that if $g$ is a deformed exponential family with a statistic $T$, then there exist an escort family $f$ such that variance of $T$ attains the Naudts's bound. Considering this from statistical perspective, let $f$ be the original model and assume that there exist a deformed exponential family $\mathcal{S}=\lbrace g(x;\theta) \mid \theta \in \Theta \subseteq \Bbb R \rbrace$ with a canonical statistic $T$ given by
 \begin{equation}
g(x;\theta)= Z(\theta T(x)-\phi(\theta)) \quad \text{or}  \quad F(g(x;\theta))= \theta T(x)-\phi(\theta) 
\end{equation} 
 where $F : (0,\infty) \longrightarrow \Bbb R$ is a smooth function satisfying $F'(x)>0$ \& $F''(x)< 0$, $Z$ is the inverse function  of $F$ and $\phi(\theta)$ is chosen such that $g$ is a probability density function. \\
 Then assume that $f$ is the $F$-escort distribution of $g$ given by
 \begin{align}
f(x,\theta)= \frac{1}{F'(g)h_{F}(\theta)}; \quad  h_{F}(\theta)=\int \frac{1}{F'(g(x;\theta))} dx  
\end{align}
 Then it is easy to see that $T$ is an unbiased estimator of the expectation parameter $\eta=E_{f_{\theta}}(T)$ and the variance of $T$  attains the Naudts's bound. 
 
For the geometric interpretation, we first  observe that  if $g$ is an exponential family then the original model $f$ is equal to $g$. Then the estimator $T$ is an unbiased estimator of the expectation parameter and its variance attains the Cramer-Rao lower bound. The exponential family has a dually flat structure. The expectation parameter is the dual coordinate in the dually flat $\alpha$-geometry  by Amari (1985) with $\alpha=1$. When $g$ is a deformed exponential family with dually flat $\chi$-geometric structure, then $E_{f_{\theta}}(T)$ is the dual coordinate (Refer Amari et al. (2012) for more details).  As observed above, $T$ is  an unbiased estimator for $E_{f_{\theta}}(T)$ and its variance attains the Naudts's lower bound. In the context of statistical inference, the $\chi$-geometry seems to provide a useful generalization of $1$-geometry.  

\section*{Acknowledgment}
The first named author is supported by the Postdoctoral Fellowship from Indian Institute of Technology Bombay, Mumbai.

\end{document}